\pdfoutput=1
\documentclass[letterpaper]{amsart}
\usepackage{amssymb}
\usepackage{enumerate}
\usepackage{graphicx}
\usepackage[usenames,dvipsnames]{xcolor}
\usepackage[final]{hyperref}
\usepackage{microtype}
\usepackage[normalem]{ulem}

\parindent 20pt	
\parskip 1ex
\linespread{1}

\newtheorem{theorem}{Theorem}

\theoremstyle{definition}
\newtheorem{definition}{Definition}
\newtheorem*{remark}{Remark}

\newcommand{\AND}{\operatorname{\textsc{and}}}
\newcommand{\OR}{\operatorname{\textsc{or}}}

\newcommand{\R}		{\mathbb{R}}
\newcommand{\N}		{\mathbb{N}}

\renewcommand{\Pr}[2][]	{\mathbf{P}_{#1}\!\left( #2 \right)}
\newcommand{\Ex}[1]	{\mathbf{E}\left( #1 \right)}
\newcommand{\E}		{\mathbf{E}}

\newcommand{\Var}		{\mathbf{Var}}

\newcommand{\isdef}	{\overset{\rm def}{=}}
\newcommand{\inlaw}	{\overset{\mathcal{L}}{=}}
\newcommand{\tendsinlaw} {\overset{\mathcal{L}}{\to}}

\begin{document}

\title{Recursive functions on conditional Galton--Watson trees}

\author{Nicolas Broutin}
\address{Sorbonne Universit\'{e}, Campus Pierre et Marie Curie Case courrier 158, 4, place Jussieu 75252 Paris Cedex 05 France.}
\email{nicolas.broutin@upmc.fr}

\author{Luc Devroye}
\address{School of Computer Science, McGill University, 3480 University Street, Montreal, Canada H3A 2K6}
\email{lucdevroye@gmail.com}

\author{Nicolas Fraiman}
\address{Department of Statistics and Operations Research, University of North Carolina at Chapel Hill, 337 Hanes Hall, Chapel Hill, NC 27599, USA}
\email{fraiman@email.unc.edu}

\subjclass[2010]{60J80,60J85,60G99}
\keywords{Random Galton--Watson tree, probabilistic analysis of algorithms, recursive functions, branching process.}

\thanks{Luc Devroye was sponsored by NSERC Grant A3456. Nicolas Broutin thanks the FRQNT-CRM and Simons-CRM programmes as well as the support from Grant ANR-14-CE25-0014 (ANR GRAAL)} 

\date{\today}

\begin{abstract}
A recursive function on a tree is a function in which each leaf
has a given value, and each internal node has a value equal to
a function of the number of children, the values of the children,
and possibly an explicitly specified random element $U$.
The value of the root is the key quantity of interest in general.
In this study, all node values and function values
are in a finite set $S$. 
In this note, we describe the limit behavior when the leaf values
are drawn independently from a fixed distribution on $S$, and
the tree $T_n$ is a random Galton--Watson tree of size $n$.
\end{abstract}

\maketitle

\section{The probabilistic model}

A recursive function on a tree is a function in which each leaf
has a given value, and each internal node has a value equal to
a function of the number of children, the values of the children,
and possibly an explicitly specified random element $U$.
The value of the root is the key quantity of interest in general.
In the present study, all node values and function values
are in a finite set $S$, and we describe the limit behavior when the leaf values
are drawn independently from a fixed distribution on $S$, and
the tree $T_n$ is a random Galton--Watson tree of size $n$.

A Galton--Watson (or Galton--Watson--Bienaym\'e) tree { (see \cite{AN72})}
is a rooted random ordered tree. Each node independently 
generates a random number of children drawn from a fixed offspring distribution $\xi$.
The distribution of $\xi$ defines the distribution of $T$,
a random Galton--Watson tree.
We define
$$
p_i = \Pr{ \xi = i }, i \ge 0.
$$
The results are sometimes described in terms of the generating
function $g$ of $\xi$:
$$
g(s) \isdef \Ex{ s^\xi } = \sum_{i=0}^\infty p_i s^i, \quad \text{for } 0 \le s \le 1.
$$
In what follows, we are mainly interested in critical Galton--Watson trees, i.e., those
having { $\Ex{\xi} = 1 $}, and $\Pr{ \xi = 1 } < 1$. 
In addition, we assume that the variance of $\xi$ is positive and finite.
We denote by $T_n$ a Galton--Watson tree conditional on its size $|T_n|$ being $n$.
These trees encompass many known models of random trees,
including random Catalan trees (all binary trees of size $n$ being equally
likely), random planted plane trees (all ordered trees being equally likely),
and random rooted labeled free trees or Cayley trees,
thanks to an equivalence property first established by 
{ Kennedy in \cite{Ken75}.}
Let $h = \hbox{\rm gcd} \{ i \ge 1: p_i > 0 \}$ be the span of
$\xi$.  It is easy to see that $|T_n| \mod h = 1$, so 
when we provide asymptotic results on $T_n$, it is
understood that $n \mod h=1$ as $n \to \infty$.


Nodes in a tree are denoted by $u, v$ and $w$, while
their values are denoted by $V(u)$, $V(v)$ and $V(w)$.
Without loss of generality,
we assume that our state space is
$$
S = \{ 1 , \ldots , k \}.
$$
We associate independently with each node 
a copy of a generic uniform $[0,1]$ random variable $U$.
Thus, $U(v)$ denotes the copy associated with node $v$.
We are given a possibly infinite family of functions
$$
f_0, f_1, f_2, \ldots,
$$
where $f_i$ maps $S^i \times [0,1]$ to $S$.  The first $i$
arguments refer to the values of the $i$ children of a node, while
the last argument refers to the generic random variable associated
with a node. In particular, { for} each leaf $v$, we have 
$$
V(v) \inlaw f_0 (U(v)).
$$
Thus, the leaf values are independent
and we denote the distribution of $f_0 (U)$  on $S$ { by $q$:}
$$
\Pr{ f_0 (U) = i } = q_i , \quad i \in S.
$$
If $v$ is an internal node with children $v_1,\ldots,v_\ell$, then
$$
V(v) = f_\ell (V(v_1),\ldots,V(v_\ell), U(v) ).
$$
The value of the root node is denoted by $V_n$.

For a { path}, with the root having value $V_n$ and the other nodes having values $V_{n-1}, V_{n-2}, \dots, V_1, V_0$, we have $V_0=f_0(U_0)$, $V_1=f_1(V_0,U_1)$, $V_2 = f_1(V_1, U_1)$, and so forth. This is a purely Markovian structure. The limit behavior is entirely known and well-documented in standard texts
on Markov chains { such as \cite{MT93}.} 
The decomposition of the transition matrix graph (which places a directed edge for every transition from
$i$ to $j$ in $S$ that has nonzero probability) is of prime importance. The most interesting case is
that of the existence of just one irreducible strongly connected component. In that case, $V_n$ either tends (in distribution)
to a stationary limit random variable or exhibits a periodic behavior if the period of the irreducible set
is more than one.

We exclude { paths} throughout the manuscript by requiring that $p_1 \not= 1$ (or, equivalently, $\Var(\xi) > 0$).

\medskip

\section{Recursive functions on random Galton--Watson trees}

As a warm-up, we need to study the behavior of the value
of the root of $T$, an unconditional critical Galton--Watson tree. 
This case has been treated thoroughly by Aldous and Bandyopadhyay { in \cite{AlBa2005}}; we will come back to their contribution shortly. 
Since $|T| < \infty$ with probability one,
the root's value, $W$, is a properly defined random variable.
What matters is its support set, that is, the set of all
possible values $W$ can take.  This support set includes
the support set of the leaf values. 
Note that the support set of $V_n$ is a subset of the support
set of $W$.  As we see later, it can be a proper subset.

Since there is no use for values of $S$ that are not in the
support set of $W$, without loss of generality we define 
$S$ as the support set of $W$.

We are not concerned with the precise derivation of the
law of $W$. It suffices to say that it is \emph{a solution} of
the distributional identity
$$
W \inlaw f_\xi (W_1,\ldots,W_\xi, U),
$$
where $W, W_1, W_2,\ldots$ are independent an identically distributed (i.i.d.),  and $U$, $\xi$ and
$W_1, W_2, \ldots$ are independent 
(indeed, without any additional condition, this equation { may admit more than one solution}). Worked out examples follow later.

\medskip
\begin{remark}
In their paper \cite{AlBa2005}, Aldous and Bandyopadhyay investigated this very fixed point equation, and it is in this context that the question of the representation of the solution as an \emph{unconditioned} Galton--Watson tree arose: if { one} expands the distributional fixed-point equation into a tree, the tree obtained is a Galton--Watson tree and the fixed-point can be represented by such a tree. Now, one of the main questions they address is the following: when is the value at the root measurable with respect to the sigma-algebra generated by the random variables in the { tree?} { (This sigma-algebra must include the information about the shape of the tree, as a subset of $\cup_{n\ge 0} \N^n$ for instance.)} When this is the case, the system is called endogenous. This question of endogeny is only interesting when the tree is infinite, and in the present case of a critical Galton--Watson { tree}, the answer is trivial. However, we shall see soon that some of the conditions they had for endogeny are intimately related to the condition for convergence in the context of Galton--Watson trees conditioned on being infinite. 
\end{remark}
\medskip

\section{Coalescent Markov chains}

We deal with an explicit Markov chain governed by
$$
X_{t}  = f(X_{t-1} , U_t),
$$
where the $U_t$'s are independent random elements with distribution $\mu$, and $f$
is a function that maps to the finite state space $S$.

{
{
\begin{definition}\label{def:coal}
We call this Markov chain \emph{coalescent} if the double Markov chain 
$$
(X_{t},Y_{t}) = ( f(X_{t-1}, U_t), f(Y_{t-1}, U_t))
$$
defined using the {\it same} random elements $U_t$, $t>0$, in both maps is such that: for any starting point $(X_0,Y_0)=(x,y)\in S^2$, with probability one, $X_t=Y_t$ for all $t$ large enough.
\end{definition}
}


Definition~\ref{def:coal} is a version of what Aldous and Bandyopadhyay 
call bivariate uniqueness; see Section 2.4 in \cite{AlBa2005}. 

\begin{remark}
A coalescent Markov chain has only one irreducible component $C$ and it is aperiodic. Otherwise, we can find two different components for the double chain $(X_t,Y_t)$ by starting at $(x,x)$ and $(x,y)$ for $x$ and $y$ in different components { (or different positions in the period)} for the original chain. This implies that regardless of the starting value $X_0$, $X_t$ tends in law to the unique stationary distribution with
support on the irreducible component $C$. Note, however, that it is easy to construct Markov chains with a unique irreducible component but that are not coalescent.
\end{remark}

\section{Kesten's tree}

It is helpful to recall convergence of $T_n$ under
a finite variance condition to Kesten's infinite tree $T_\infty$ \cite{Kes86} (see also \cite{LP16}).
Let us first recall the definition of $T_\infty$.
In every generation, starting with the 0-th generation that contains
the root, one node is marked. These marked nodes form an
infinite path called the spine. The number of children of
the node $v_i$ on the spine in generation $i$ is denoted by $\zeta_i$.
The sequence $(\zeta_0, \zeta_1, \ldots)$ is i.i.d.\ with
common distribution $\zeta$ having the size-biased law:
$$
\Pr{ \zeta = i } = ip_i = i\Pr{\xi = i}, \quad i \ge 1.
$$
Observe that $\Ex{ \zeta } = 1 + \sigma^2$.
Furthermore, of the $\zeta_i$ children of $v_i$,
we select a uniform random node to mark as $v_{i+1}$.
The unmarked children of $v_i$ are all roots of
independent unconditional Galton--Watson trees
distributed as $T$.

Convergence of $T_n$ to $T_\infty$ takes place in the following
sense. Let $(T_n , k)$ denote the truncation of $T_n$
to generations $0,1,\ldots,k$. Let $t_k$ denote an arbitrary
finite ordered tree whose last generation is at most $k$.
Then for all $k$ and $t_k$,
$$
\lim_{n\to\infty} \Pr{ (T_n, k) = t_k } =  \Pr{ (T_\infty, k) = t_k }.
$$
The total variation distance between $(T_n,k)$ and $(T_\infty, k)$
is given by
$$
{1 \over 2} \sum_{t_k} \left| \Pr{ (T_n, k) = t_k } - \Pr{ (T_\infty, k) = t_k } \right|.
$$
It is easy to see that this tends to zero as well.

Let us first analyze the root value of $T_\infty$.
It is not at all clear that it is even properly defined
since $T_\infty$ has an infinite path.  However,
the root value is with probability one properly defined under a Markovian condition.
To set this up, we consider a Markov chain on $S$ that runs from
$-\infty$ up the spine to time $0$ (the root), where ``time''
refers to minus the generation number in the Galton--Watson
tree. Let us call $X_{-t}$ the value of node $v_t$ on the
spine. Furthermore, we have
$$
 X_{-t} = f( X_{-t-1}, ~\overline{\!U}_{-t} ),
$$
where $ ~\overline{\!U}_{-t}$ gathers all random { variables} necessary to
compute the value of $v_t$ from that of $v_{t+1}$, i.e.,
$\zeta_t$ (the number of children), $M$ (the index of the marked child), 
the random element $U$, and $W_1, W_2, \ldots$
(the values of the non-marked children, which are
i.i.d.\ and distributed as the value of the root
of an unconditional Galton--Watson tree $T$). 
This is called the {\it spine's Markov chain}.
The Markov chain of Definition~\ref{def:coal},
$$
 (X_{-t}, Y_{-t}) = ( f( X_{-t-1}, ~\overline{\!U}_{-t} ), f( Y_{-t-1}, ~\overline{\!U}_{-t} ) )
$$
is called the {\it spine's double Markov chain}.

\begin{theorem}[\sc limit for kesten's tree.]\label{thm:kestree}
Assume that the spine's Markov chain is coalescent.
Then, the value of the root of $T_\infty$ is with probability one
properly defined.  Furthermore, it is exactly distributed as the
stationary distribution of the spine's Markov chain.
In addition, all values on the spine have the same distribution.
\end{theorem}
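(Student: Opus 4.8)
The plan is to exploit the coalescence hypothesis via a classical "coupling from the past" construction along the spine. Run the spine's double Markov chain $(X_{-t},Y_{-t})$ from generation $t$ down to the root, using the random data $\overline{\!U}_{-s}$, $0<s\le t$, attached to the spine vertices. The key point is that the root value of the truncation of $T_\infty$ to generations $0,\dots,t$ \emph{is} a deterministic function of these data and of the value assigned at generation $t$: namely, if we feed the spine's recursion $X_{-s}=f(X_{-s-1},\overline{\!U}_{-s})$ with \emph{any} starting value $x\in S$ at level $t$, the output at the root is $F_t(x)$ for some (random) map $F_t\colon S\to S$. First I would observe that, since each $\overline{\!U}_{-s}$ packages $\zeta_s$, the marked index $M$, the auxiliary $U$, and the i.i.d.\ unconditional subtree root values $W_1,W_2,\dots$, the finite truncation of $T_\infty$ at depth $t$ has a genuine root value precisely when all those subtrees are finite and their root values are defined — which happens almost surely because the underlying offspring law is critical and we are in the Markovian setting where $W$ is a well-defined random variable.

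Next I would invoke Definition~\ref{def:coal} directly: coalescence says that for \emph{every} pair $(x,y)\in S^2$, running the double chain with the same $U$'s, we have $X_{-s}=Y_{-s}$ for all $s$ large enough, almost surely. Applied to the spine's double chain read from level $t$ down to the root, this gives a (random) finite level $\tau$ beyond which $F_t(x)=F_t(y)$ for all $x,y\in S$; equivalently, for all $t\ge\tau$ the map $F_t$ is constant. Writing $F_t(\star)$ for that constant value when $t\ge\tau$, the sequence $(F_t(\star))_{t\ge\tau}$ is eventually stationary, and its eventual value $V_\infty$ does not depend on what we put at level $t$. This $V_\infty$ is the root value of $T_\infty$: the truncation at any level $t\ge\tau$ already determines it, regardless of the (undefined) configuration deeper in the tree, so the root value is with probability one properly defined. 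The finiteness of $\tau$ is exactly the content of coalescence, and there is a subtlety worth spelling out — the number of $(x,y)$ pairs is finite, so the union over pairs of the "coalesced by level $\tau_{x,y}$" events still occurs a.s., and $\tau=\max_{x,y}\tau_{x,y}<\infty$ a.s.

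To identify the law of $V_\infty$ with the stationary distribution $\pi$ of the spine's Markov chain, I would run the chain from level $t$ down to $0$ started from $\pi$ at level $t$: by stationarity, the root value has law $\pi$ for every $t$; letting $t\to\infty$, coalescence forces this $\pi$-distributed root value to agree with $V_\infty$ with probability tending to $1$, so $V_\infty\sim\pi$. The same argument applied from level $t$ down only to level $t'$ (for fixed $t'$) shows that the spine value $X_{-t'}$ is the $t'$-step image under the recursion of a level-$t$ value; as $t\to\infty$ this converges (again by coalescence) to a random variable that is, by the same stationarity/coupling argument, distributed as $\pi$. Hence every vertex on the spine carries a value with law $\pi$, which is the last assertion. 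The main obstacle, and the place to be careful, is the almost-sure well-definedness of the maps $F_t$ — one must confirm that the countably many i.i.d.\ unconditional Galton--Watson subtrees hanging off the spine are all finite with well-defined root values, so that $F_t$ is genuinely a function $S\to S$ and the coupling-from-the-past argument has something to act on; once that is in hand, coalescence does all the work.
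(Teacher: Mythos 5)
Your proposal is correct and takes essentially the same route as the paper, whose proof consists of invoking Propp and Wilson's coupling-from-the-past argument along the infinite spine (citing also Aldous and Bandyopadhyay); you have simply written out the details — the random maps $F_t$, their eventual constancy under the coalescence hypothesis, the a.s.\ well-definedness of the data $\overline{\!U}_{-s}$ via finiteness of the critical subtrees, and the identification of the limit with the stationary law — that the paper leaves to the citation. No gaps.
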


\begin{proof}
The proof follows immediately from the coalescent condition along the lines of the proof of Propp and Wilson's theorem { \cite{PW96}} on coupling from the past for explicit Markov chains. 
{ See also~\cite{AlBa2005}}, who have a genuine tree version; here it suffices to follow the infinite spine, so the classical Markov chain setting suffices.
\end{proof}

\medskip

We use the notation $W_\infty$ for a random variable
that is distributed as the stationary distribution of the spine's
random chain.

\medskip

\section{Simulating the root value in tree-based Markov chains.}

Theorem \ref{thm:kestree} has an important algorithmic by-product.
Assume that we wish to generate on a computer a random variable
that is distributed as $W_\infty$.
As a first step, we can write a simple procedure that
generates an unconditional Galton--Watson tree $T$,
associates with all nodes the random elements, 
and computes the root value, $W$. The time taken by
this method is proportional to $|T|$, which is almost surely
finite. In some cases, one can generate $W$ more efficiently
if one knows the distribution on $S$ that solves
the distributional identity
$$
W \inlaw f(W,U) \isdef f_\xi (W_1,\ldots,W_\xi,U),
$$
where $W_1,\ldots,W_\xi$ are i.i.d.\ and distributed as $W$,
and $U$ is the random element.
To simulate the root value of Kesten's tree under the condition of Theorem 1,
we proceed by generating $T_\infty$ iteratively along the spine.
As we process $v_i$, the node on the spine's level $i$,
we generate its random element ($U_i$), its number of children ($\zeta_i$),
its marked child's index ($M_i$, uniformly distributed between $1$ and $\zeta_i$), and
the values $W_{j,i}$ for $1 \le j \le \zeta_i, j \not= M_i$ (which
are i.i.d.\ and distributed as $W$).  As we also have these values
for all the ancestors of $v_i$, we can check the root's
value given that the marked node takes all possible values
in $S=\{1,\ldots,k\}$. If the root's value is unique, then coalescence
has taken place, and thus, the root's value is precisely
distributed as $W_\infty$.
Note that all the random elements generated for each node
stay with the node forever.
Because our Markov chain is coalescent, this procedure
halts with probability one.
This is, in fact, a tree-based version of coupling from the past
{ \cite{PW96,Fil98}.} 

\medskip

\section{The main theorem.}

We are now ready for the main theorem.

\begin{theorem}[\sc limit for $T_n$]\label{thm:main}
Assume that the spine's Markov chain is coalescent.
Then, the value of the root of $T_n$ tends in distribution to
$W_\infty$ as $n \to \infty$.
\end{theorem}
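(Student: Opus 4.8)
The plan is to transfer the coupling-from-the-past argument that underlies Theorem~\ref{thm:kestree} from the spine of $T_\infty$ to the ancestral line of a uniformly random node in $T_n$, using Kesten's convergence $(T_n,k)\to(T_\infty,k)$ in total variation to control the top $k$ generations. Concretely, fix $k$ and consider the value $V_n$ of the root of $T_n$. The key structural observation is that computing $V_n$ can be organized from the leaves upward, but because the chain is coalescent we do not need to descend all the way: if we fix the truncated tree $(T_n,k)$ and pretend the values of the depth-$k$ nodes are arbitrary elements of $S$, then propagating these values up through the $k$ top generations via the maps $f_\ell$ gives, for large $k$, a value at the root that does \emph{not} depend on the choice of the depth-$k$ values. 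This is exactly the coalescence statement applied along the unique infinite ray in $T_\infty$, and via the coupling with $(T_n,k)$ it applies to $T_n$ as well with high probability.

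First I would make precise the event $\mathcal{C}_k$ that ``coalescence has occurred within the top $k$ generations'', i.e.\ that the root value obtained by the bottom-up propagation through $(T_n,k)$ is the same for all $2^{\text{(number of depth-}k\text{ nodes)}\cdot\log k}$ ... more carefully, for every assignment of values in $S$ to the nodes at depth exactly $k$. On $\mathcal{C}_k$, the true value $V_n$ equals this common value, call it $R_k(T_n)$, which is a function only of $(T_n,k)$ and the random elements $U(v)$ and the subtree-root values $W$ hanging off the first $k$ generations. Second, I would observe that under Kesten's tree $T_\infty$ the analogous quantity $R_k(T_\infty)$ is, by Theorem~\ref{thm:kestree} and the Propp--Wilson argument, equal to the value of the root of $T_\infty$ with probability tending to $1$ as $k\to\infty$; in particular $\Pr{\mathcal{C}_k \text{ for } T_\infty} \to 1$, and on that event $R_k(T_\infty)$ is distributed as $W_\infty$ (up to an error that vanishes as $k\to\infty$).

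Third comes the transfer step. For fixed $k$, the quantity $R_k(\cdot)$ together with the indicator of $\mathcal{C}_k$ is a measurable function of the truncation to depth $k$ decorated with i.i.d.\ marks (the $U$'s) and i.i.d.\ subtree values (the $W$'s at the fringe), so
$$
\left| \Pr{V_n = i,\ \mathcal{C}_k} - \Pr{R_k(T_\infty) = i,\ \mathcal{C}_k} \right| \le d_{\mathrm{TV}}\big((T_n,k),(T_\infty,k)\big),
$$
which tends to $0$ as $n\to\infty$ for each fixed $k$. Combining with $\Pr{\mathcal{C}_k} \to 1$ (uniformly enough, using that $\Pr{\text{not }\mathcal{C}_k \text{ for }T_n} \le \Pr{\text{not }\mathcal{C}_k \text{ for }T_\infty} + d_{\mathrm{TV}}$), we get for every $i\in S$
$$
\limsup_{n\to\infty}\left| \Pr{V_n = i} - \Pr{W_\infty = i} \right| \le \varepsilon_k,
$$
with $\varepsilon_k \to 0$; letting $k\to\infty$ finishes the proof.

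The main obstacle I expect is the uniformity in the coalescence estimate: I need $\Pr{\text{not }\mathcal{C}_k \text{ for }T_n}$ to be small \emph{uniformly in $n$} for large $k$, not just for each fixed $n$. The clean way around this is to never estimate it directly on $T_n$: bound it by $\Pr{\text{not }\mathcal{C}_k \text{ for }T_\infty} + d_{\mathrm{TV}}((T_n,k),(T_\infty,k))$, take $n\to\infty$ first (killing the TV term for fixed $k$), then $k\to\infty$ (killing the $T_\infty$ term by Theorem~\ref{thm:kestree}). One should also check the mild measure-theoretic point that $R_k$ and $\mathcal{C}_k$ really are determined by the depth-$k$ truncation together with the decorations — the fringe subtree values $W$ enter, but they are i.i.d.\ and independent of the shape, so they can be coupled identically between $T_n$ and $T_\infty$, and the inequality above is unaffected. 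No new ideas beyond Theorem~\ref{thm:kestree} and Kesten's local limit are needed; it is a matter of assembling them in the right order.
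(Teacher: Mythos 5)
Your overall strategy is the same as the paper's: use local convergence of $(T_n,k)$ to $(T_\infty,k)$ in total variation to transfer the problem to Kesten's tree, and use coalescence to show that the root value is determined by the first $k$ generations with probability close to one. The transfer step in your third paragraph is correct and is essentially the paper's Doeblin coupling in a slightly more functional form. The gap is in your second step, where you assert that $\Pr{\mathcal{C}_k \hbox{ for } T_\infty}\to 1$ follows from Theorem~\ref{thm:kestree} and the Propp--Wilson argument. Your event $\mathcal{C}_k$ requires the root value to be insensitive to the values of \emph{all} nodes at depth exactly $k$, and these come in two kinds. The spine node $v_k$ and everything below it are handled by coalescence of the spine's Markov chain between some level $t_0\le k$ and the root. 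But there are also depth-$k$ nodes lying in the unconditioned Galton--Watson trees hanging off $v_0,\dots,v_{t_0-1}$, i.e.\ attached \emph{above} the level at which coalescence occurs: changing such a node's value can change the value of that fringe subtree's root, hence the value of some $v_j$ with $j<t_0$, and coalescence below level $t_0$ says nothing about this. So spine coalescence alone does not yield $\mathcal{C}_k$; you additionally need that, with probability tending to one, no fringe subtree attached to the pre-coalescence portion of the spine reaches depth $k$. This is exactly the role of the quantity $H$ and the bound on $\Pr{H\ge k-\ell}$ (via Kolmogorov's estimate and the law of large numbers, with $\ell=\lfloor k^{1/3}\rfloor$) in the paper's proof. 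In your softer, qualitative framework the fix is short---almost surely only finitely many, almost surely finite, trees hang off $v_0,\dots,v_{t_0-1}$, so for $k$ large they all die out before depth $k$---but as written the step is missing, and it is precisely where the paper does its real quantitative work.

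A smaller point: on $\mathcal{C}_k$ your $R_k$ is a function of $(T_n,k)$ and the $U$'s alone; the fringe values $W$ you invoke are not needed, since every node of the truncation other than those at depth exactly $k$ has its value determined inside the truncation, and the depth-$k$ values are quantified over. It is good that they are not needed: subtrees rooted at depth-$k$ nodes of $T_n$ are not unconditioned Galton--Watson trees (they feel the conditioning on total size $n$), so coupling them ``identically'' between $T_n$ and $T_\infty$, as your last paragraph suggests, would require justification.
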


\begin{proof}
We show that for given $\epsilon > 0$, the total variation
distance between $W_\infty$ and the value of the root of $T_n$ 
is less than $\epsilon$. 
First, we invoke { the local convergence of conditioned Galton--Watson trees toward Kesten's tree, see \cite{AD14} for instance}: for any fixed $k$, there exists an $n_k$ such that for
all $n \ge n_k$
the total variation distance between $(T_\infty,k)$
and $(T_n,k)$ is less than $\epsilon/2$.
By Doeblin's coupling theorem \cite{Doe37}, we can find
coupled trees $T_n$ and $T_\infty$ for which 
$$
\Pr{ (T_n,k) \not= (T_\infty,k) } \le {\epsilon \over 2}
$$
for such $n$.  Let $A_{n,k}$ be the bad event, $(T_n,k) \not= (T_\infty,k)$.
Furthermore, on the complement $A_{n,k}^c$, we populate all
nodes in $(T_\infty,k)$ with the missing random elements, i.e., the
$U$'s associated with the nodes. 
Nodes in $(T_n,k)$ 
receive the same random elements as their counterparts in $(T_\infty,k)$. 
Those that live at or past level $k$ are given independent elements.

Define $\ell = \lfloor {k^{1/3}} \rfloor$.
Let $H$ be the maximal height of any subtree rooted at any non-marked
child of $v_0,\ldots,v_\ell$. 
Let $\zeta_i$ be the number of children of $v_i$.
Then, { for $T$ an unconditioned Galton--Watson tree},
\begin{align*}
\Pr{ H \ge k-\ell }  
&\le \Pr{ \sum_{i=0}^\ell ( \zeta_i -1 )  \ge \ell^2 } + \ell^2 \, \Pr{ {\rm height} (T) \ge k-\ell } \\
&\le \Pr { \sum_{i=0}^\ell ( \zeta_i -1 )  \ge \ell^2 } + \ell^2 \times {2+o(1) \over \sigma^2 (k-\ell) } 
\end{align*}
where in the last step, we used Kolomogorov's estimate { \cite{Kol38, KNS66}} \footnote{In the case 
that $\sigma^2=\infty$, the second term should be replaced by $o(1/(k-\ell))$
{ (see \cite{KNS66,Se69})}.}
By the { weak} law of large numbers, and since $\ell\sim k^{1/3}$, we see that the limit 
of the upper bound is zero as $k \to \infty$. 

Consider the values of the nodes $v_0, \ldots, v_\ell$ for
both trees, $T_n$ and $T_\infty$, provided that { $A_{n,k}^c$ holds}. 
Call these $W_{n,0},\ldots,W_{n,\ell}$
and $W_0,\ldots,W_\ell$, respectively.
{ We observe that if $H < k-\ell$,
then $W_{n,0} = W_0$
regardless of whether $W_{n,\ell} = W_\ell$ or not, provided that the spine Markov chain, started
at level $\ell$ coalesces before level $0$.} By our condition,
this happens with probability $1-o(1)$ as $k \to \infty$.
Thus, the probability that the root values of $T_n$ and $T_\infty$ are
different is less than
$$
\Pr{ A_{n,k}} + \Pr{ H \ge k-\ell } + 
 \Pr{ A_{n,k}^c, H < k-\ell, W_{n,\ell} \not= W_\ell , W_{n,0} \not= W_0 }.
 $$
We first choose $k$ large enough to make each of the last two terms less
than $\epsilon / 3$. Having fixed $k$, the first term is smaller than
$\epsilon / 3$ for all $n$ large enough.
Since $W_0$ has the sought limit distribution, we see hat the
total variation distance between $W_{n,0}$ and $W_0$ is not more than
$\Pr{ W_{n,0} \not= W_0 } < \epsilon$.
\end{proof}

\medskip

\section{Applications}

\subsection{Negative example 1: The counting function.}

When 
$$
f_\ell (w_1,\ldots,w_\ell, \,\cdot) \equiv 1+\sum_{i=1}^\ell w_i,
$$
then the root value of $T_n$ is $|T_n| = n$. 
The ``$\!\!\!\mod k$'' version of this function can be considered to force
a finite state space: When 
$$
f_\ell (w_1,\ldots,w_\ell, \,\cdot ) \equiv 1+\sum_{i=1}^\ell w_i \!\!\mod k,
$$
then the root
value of $T_n$ is $n \!\!\!\mod k$. The spine's Markov chain
is not coalescent: when it is started with values $(i,j)
\in \{0,1,\ldots,k-1\}^2$, then all its future values
are of the form $(i+\lambda \!\!\!\mod k, j+\lambda \!\!\!\mod k)$,
so that there are indeed at least $k$ irreducible
components in the chain.

\subsection{Negative example 2: The leaf counter function.}

When 
$$
f_\ell (w_1,\ldots,w_\ell, \, \cdot) \equiv \max \left( 1, \sum_{i=1}^\ell w_i \right),
$$
then the root value of $T_n$ counts $L_n$, the number of leaves in the tree. { As before, we consider in the following the ``$\!\!\!\mod k$'' version. }
Here, the spine's double Markov chain is not coalescent because it has at
least $k$ irreducible components, just as in the first example.
Even though Theorem~\ref{thm:main} does not apply, we know from elsewhere (e.g.,
{ applying Aldous result in \cite{Ald91}}) that $L_n/n \to p_0$ in probability. What we are
saying here is that the much more refined result about the
asymptotic limit law of $L_n \!\!\!\mod k$ for fixed $k$ cannot be
obtained from Theorem~\ref{thm:main}. In particular, when $p_0=p_2=1/2$
(a Catalan tree), $T_n$ is not defined unless $n$ is odd.
In that case, $L_n = (n+1)/2$, and thus, $L_n \!\!\!\mod k = (n+1)/2 \mod k$,
which cycles through the values of $S= \{0,1,\ldots,k-1\}$.

\subsection{Example 3: Length of a random path.}

A random path in a tree is defined by starting 
at the root and going to a random child
until a leaf is reached. The (edge) length of a random path in $T_n$
is called $L_n$. One can once again consider all computations $\!\!\!\mod k$, 
for some arbitrary natural number $k\ge 2$,
but we do not write this explicitly. The recursive function
can be viewed as follows:
$$
f_\ell (w_1,\ldots,w_\ell, u) = 
\begin{cases}
   1 + w_{1+ \lfloor u\ell \rfloor} & \text{if } \ell > 0, \\
   0 & \text{if } \ell=0. \\
\end{cases}
$$
Here $u$ is a uniform $[0,1]$ random variable.
If $f(\cdot, u)$ is the Kesten tree version of this, then
there is coalescence in one step in the Markov chain
if the number of children (recall that it is denoted by $\zeta$ on the spine) 
is more than one, and $\lfloor u\ell \rfloor$ (the child chosen for the random path)
is not equal to the marked node.
The probability of this is 
$$
\Ex{ \left(1 - 1/\zeta \right) } = 1 - \sum_{i=1}^\infty p_i = p_0.
$$
The probability of no coalescence in $t$ steps is
smaller than
\begin{equation}\label{eq:length_coal}
(1-p_0)^t,
\end{equation}
and thus tends to zero.
Thus, Theorem~\ref{thm:main} applies to the length of a random path $\mod k$.
Since the expected length of a random path in an unconditional
Galton--Watson tree is $1/p_0$ and in a Kesten tree is $2/p_0$,
we see that the $\!\!\!\mod k$ can safely be omitted\footnote{What we mean here is that, 
since the sequence $(L_n)_{n\ge 1}$ is tight, the convergence of $\Pr{L_n \!\!\!\mod k =i}$, 
for arbitrary $k$ imply the convergence of $\Pr{L_n = i}$.}. 
The length of
a random path in $T_n$ tends in distribution to the root value
of the Kesten tree.

It is easy to see that for an unconditional Galton--Watson tree $T$,
the random path length ($W$) is geometric with parameter $p_0$, i.e.,
$$
\Pr{ W = i } = p_0 (1-p_0)^i, i \ge 0.
$$
Also, in Kesten's tree, the number of edges traversed 
on the spine is geometric with parameter
$$
\Ex{ \left(1 - 1/\zeta \right) } = 1 - \sum_{i=1}^\infty p_i = p_0.
$$
Thus, $L_n \tendsinlaw W+W'$,
where $W, W'$ are independent geometric$(p_0)$ random variables.

\noindent{\textbf{Remark:} One may replace the use of ``$f \mod k$'' for an arbitrary natural number $k$ by $\min(f,k)$: doing this might simplify the arguments related to tightness, since the convergence of $\min(L_n,k)$ to something of the form $\min(L,k)$ for arbitrary $k$ implies tightness. However, here, our main objective is merely to illustrate the variety of uses of our result, and we rely on the well-known that $(L_n)_{n\ge 1}$ is tight. }

\medskip

\subsection{Example 4: Existence of a transversal in a pruned tree.}

{ Given a tree mark independently and with
probability $p$ each node in the tree.  One may think of a marked node as
a defective node. A transversal of a tree is a collection of nodes
which intersects every path from root to leaf. A transversal is called marked if 
all nodes in it were marked. The main question is that of the existence of a marked transversal in the tree; }this has been used as a model
of breaking up terrorist cells (see Chvatal et al, 2013).

{ It fits in our framework using the following correspondance: }
A marked node has the value one. An unmarked node has
value one if its subtree contains a marked transversal, i.e., if all
the subtrees corresponding to its children contain marked transversals.
The basic recursion for a node with child values $w_1, \ldots, w_\ell$
and uniform element $U$ (which is used for marking) is
$$
w = f_\ell (w_1, \ldots, w_\ell, U)
= 
\begin{cases}
   1  & \text{if $U<p$}, \\
   \prod_{i=1}^\ell w_i & \text{if $U>p$, $\ell>0$}, \\
   0  & \text{if $U>p$ and $\ell=0$}. \\
\end{cases}
$$
The question of existence of a marked transversal then boils down to whether the value at the root is one.
If coalescence does not occur in one step,
then we must have $U > p$. Therefore, the probability of
no coalescence in $t$ steps is not more than $(1-p)^t$,
and we have indeed a coalescent Markov chain to which 
Theorem~\ref{thm:main} applies. When the limit law of $W_\infty$ is worked
out, i.e., { $\rho^*=\Pr{ W_\infty = 1 }$ is identified},
one rediscovers the result of { Devroye in \cite{Dev11}: 
\[\rho^*=\frac{p}{1-(1-p)g'(r)}\qquad \text{with}\qquad  r = p + (1 - p)(g(r)- g(0)),\]
where the fixed-point equation on the right defines $r$ uniquely and $g(s)=\E[s^\xi]$.}

\medskip

\subsection{Example 5: The random child function.}

We define $f_0 (U) = U$, thereby attaching an independent random variable, $U$
to each leaf.
For internal nodes with $\ell$ children, we let $V$ be a uniform $[0,1]$ random variable and
have the recursion
$$
w = f_\ell (w_1,\ldots,w_\ell, V) = w_{1+\lfloor \ell V \rfloor},
$$
the value is that of a uniformly at random chosen child. 
This map percolates one of the leaf values up to the root.
In the spine's Markov chain, coalescence occurs in one step
if, as in the random path length example, a node
does not select its sole marked child. Thus, as in that example, we see from \eqref{eq:length_coal} that 
the probability of not having coalesced in $t$ steps
is not more than $(1-p_0)^t$, and thus, Theorem~\ref{thm:main} also
applies to this case. It should be obvious that $W_\infty \inlaw U$.
(In this case, $W$ is not discrete; the results of Theorem~\ref{thm:main} still apply because 
the coalescence actually does not depend on the actual values at the leaves.)

\medskip

\subsection{Example 6: The minimax function.}

This example follows a model studied by Broutin and Mailler { in \cite{BM17}.}
For each node, we flip a Bernoulli$(p)$ coin to
determine whether the node is a max-node (with probability $p$) or a min-node
(with probability $1-p$).
Max nodes take the maximum of the child values, and min nodes take
the minimum.  In addition, leaf nodes are given a Bernoulli$(q)$
value.
For an unconditional critical Galton--Watson tree, Avis and Devroye
{ (in unpublished work)} showed that the root value is Bernoulli$(p^*)$ where
$p^*$ is the unique solution of the equation
$$
p^* = pp_0 + q (1 - g(1-p^*)) + (1-q) (g(p^*)-p_0),
$$
where we recall that $g(s) = \Ex{ s^\xi }$.


When $p$ and $q$ are both in $(0,1)$, then $p^* \in (0,1)$.
For a max (min) node with $\zeta$ children, we have coalescence in one step if $\zeta > 1$
and the leftmost non-marked child of the node has the value one (zero).
So, the probability of avoiding coalescence in $t$ steps
is not more than
$$
\left( 1 - (1 - p_1) ( pp^*  + (1-p)(1-p^*) )  \right)^t,
$$
and hence we have a coalescent Markov chain when $p, q \in (0,1)$
and $p_1 \not= 1$ { (we precisely excluded the special case $p_1=1$ in the introduction)}.
Note that this result does not require a finite variance for $\xi$.

If $T_n$ is a critical Galton--Watson tree with $p_1 < 1$,
conditioned to be of size $n$,
and if the variance of $\xi$ is finite,
then Theorem~\ref{thm:main} applies. One can compute the limit law of the Markov chain.
In particular, the root value is Bernoulli$(p_n^*)$ where
$$
\lim_{n \to \infty} p_n^* = { q ( 1 - g' (1-p^*) ) \over  1 - q g' (1-p^*) - (1-q) g' (p^*) }.
$$

\medskip

\subsection{Example 7: Random Boolean functions.} 
This is a ``functional version'' of the previous example, which also shows that Theorem~\ref{thm:main} 
also applies to objects that are richer than merely integers. 

Assume for simplicity that $\xi$ is $0$ or $2$ with equal probability, so that $T_n$ is binary. For each node, one flips { an} independent Bernoulli$(p)$ coin to determine { whether it is an} $\AND$-node (with probability $p$) 
or an $\OR$-node (with probability $1-p$). Additionally, the leaves receive one of the { $2k$ 
Boolean literals $x_1, x_2,\dots, x_k, \bar x_1, \bar x_2, \dots, \bar x_k$} independently and uniformly at random (here, $\bar x$ means ``not $x$''). Here, rather than looking at real or Boolean values, we let $S$ be the set of Boolean functions on the variables $x_1,x_2,\dots, x_k$ (so the value of each node is a Boolean function). Then, the value of an $\AND$-node is the Boolean $\AND$ of the values of its children, while an $\OR$ node takes the Boolean $\OR$ of the values of its children. The value at the root is the random Boolean function of $x_1,x_2,\dots, x_k$ that is computed by this ``$\AND$/$\OR$ tree''. 

Note first that, since 
$\AND$/$\OR$ is a complete set of Boolean connectives, every Boolean function 
of $x_1,x_2,\dots, x_k$ can be computed by some finite binary tree with leaves labelled by the corresponding literals $x_1, \bar x_1, \dots, x_k, \bar x_k$. To see that 
the spine's Markov chain is coalescent, observe that the chain coalesces in 
one step if the spine node is an $\AND$ node, and the Boolean 
function computed by the finite tree is identically ``false''; then the node's value is false regardless of the value of its child on the spine. Now the finite tree indeed computes ``false'' with positive probability: one just needs a tree consisting only of an internal node labelled by $\AND$ and leaves, two of which are labelled by $x_i$ and $\bar x_i$, for some $1\le i\le k$. As a consequence, coalescence does not happen in $t$ steps 
with probability 
exponentially small in $t$. It follows that Theorem~\ref{thm:main} applies, 
which proves that the random Boolean function computed at the 
root converges in distribution. Note further that, since the Markov 
chain is irreducible, every Boolean function occurs with positive probability. 
It thus completes results by Broutin and Mailler in \cite{BM17}.

\medskip

\subsection{Example 8: Random binary subtree.} 

One chooses a random binary subtree 
of $T_n$, which contains the root as follows. If the root { has} two children or less, we keep
all of them; otherwise, it has at least three children and we select two uniformly at random
without replacement. One then continues in this fashion at the selected nodes, therefore 
constructing a subtree $T^\star$ of $T_n$ whose nodes all have at most two children. 
If $\xi$ has support contained in $\{0,1,2\}$, the tree $T^\star$ constructed is just
$T_n$, so we suppose that $\Pr{\xi>2}>0$. 
Then, the size (number of nodes) of the subtree $T^\star$ converges in distribution. 

This fits in our framework. Consider first the ``$\!\!\!\mod k$'' version by 
setting $f_0(U)=1$, $f_1(w_1,U)=w_1$, $f_2(w_1,w_2,U)=w_1+w_2 \!\!\! \mod k$ 
 and, for $\ell\ge 3$, 
\[f_\ell(w_1,w_2,\dots, w_\ell,u) = w_{\sigma(u)} + w_{\tau(u)} \!\!\! \mod k,\]
where $(\sigma(u),\tau(u))=(i,j)$ if $u \in A_{i,j}$ for some partition
 $(A_{i,j})_{1\le i<j\le \ell}$ of $[0,1]$ into intervals of equal length.
Observe that, if $\Pr{\xi = 1} = 0$, then the size of $T^\star$ is odd with probability 
one; otherwise it may take any integer value at least three. 

The spine's Markov chain is coalescent: it coalesces in one step if a node does 
not select its unique child lying on the spine; this happens with probability $p>0$, so that 
there is coalescence after $t$ steps with probability at least $1-(1-p)^t$. 
This implies in 
particular that $T^\star$ is actually almost surely finite, so that (see Example 3) there is 
convergence in distribution of the size without the need for the $\!\!\!\mod k$. 

\medskip

\subsection{Example 9: The majority function.}

We consider the much studied majority function model { (see \cite{ODo14}, Chapter 5)}.
We associate with the leaves Bernoulli$(p)$ random variables.
For fixed $k > 0$, we consider a tree in which all nodes have
either $0$ or $2k+1$ children.  By criticality of the Galton--Watson
tree we are studying, this forces $p_{2k+1}=1/(2k+1)$,
$p_0 = 1 - 1/(2k+1)$ and $p_i=0$ for $i \not\in \{ 0, 2k+1 \}$.
At each internal node with $2k+1$ children, we 
take a majority vote among the children.  In other words,
if $x_1,\ldots,x_{2k+1}\in \{0,1\}$ are the binary child values,
then the value at the node is
$$
 \mathbf 1_{\{  2 (x_1+\dots+x_{2k+1}) \ge (2k+1)\}}.
$$
Let us first consider the value $W$ of the root of
an unconditional Galton--Watson tree. If $W$ is Bernoulli$(p^*)$,
then a simple recursion shows that $p^*$ is the solution of
the following recursive equation:
$$
p^* = {1 \over 2k+1} \Pr { 2 \, {\rm Binomial} (2k+1, p^*) > 2k+1 }
    +   {2k \over 2k+1} p.
$$
This yields an equation of degree $2k+1$. The solution $p^*$
increases monotonically from $0$ (at $p=0$) to $1/2$ (at $p=1/2$)
and $1$ (at $p=1$).

Let $W_n$ be the value of the root of $T_n$,
a conditional Galton--Watson tree of size $n$. For $p \in \{ 0,1\}$,
we have $W_n  \in \{ 0,1\}$ accordingly.
So, we assume $p \in (0,1)$.
For an internal node with $2k+1$ children, we have
coalescence in one step if the $2k$ non-marked children are all one.
The probability of this is at least $(p^*)^{2k} > 0$.
So, the probability of avoiding coalescence in $t$ steps
is not more than
$$
\left( 1 - (p^*)^{2k}   \right)^t,
$$
and hence we have a coalescent Markov chain when $p \in (0,1)$.
By Theorem~\ref{thm:main}, $W_n$ tends to a limit random variable.   
In fact, along the spine, we have a simple Markov chain on 
$\{0,1\}$ with transition probabilities $p(0,1)$ and $p(1,0)$
explicitly computable:
\begin{align*}
p(0,1) &= \Pr {{\rm Binomial} (2k, p^*) > k}, \\
p(1,0) &= \Pr {{\rm Binomial} (2k, p^*) < k }. 
\end{align*}
Thus, by well-known results on Markov chains,
$$
\lim_{n\to\infty} \Pr { W_n = 1 } = { p(0,1) \over p(0,1) + p(1,0) }
  = { \Pr { {\rm Binomial} (2k, p^*) > k } \over 1 - \Pr { {\rm Binomial} (2k, p^*) = k } }.
$$

\begin{figure}
	\includegraphics[width=7cm]{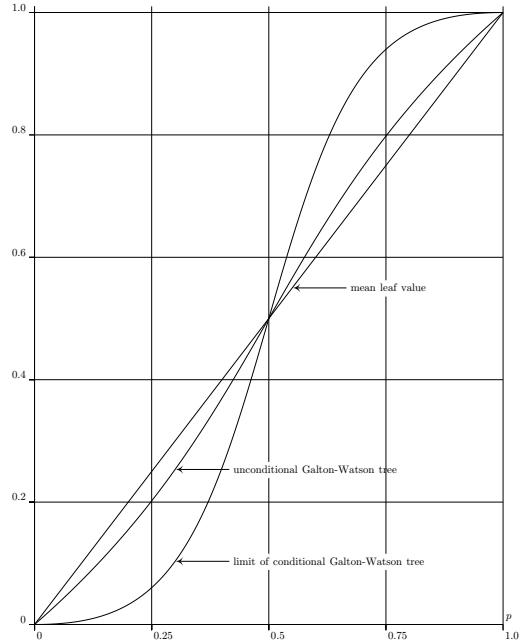}
	\caption{\label{fig:marjority}Consider the case of a ternary
Galton--Watson tree of Example 9 (case $k=1$ in the text).
We show the Bernoulli parameters for the leaf values (the diagonal),
the unconditional Galton-Watson tree,
and the limit of the conditional Galton-Watson tree.}
\end{figure}

\medskip

\subsection{Example 10: The median function.}

Assume that $\xi$ is with probability one either $0$ or odd, so
$\zeta$ is odd. The leaves receive uniform values in a finite set $S$.
Internal nodes take the median of the values of their children.
It is a good exercise to show that the spine's Markov chain is coalescent,
and that Theorem~\ref{thm:main} applies.

\section{Remarks and open questions}

\noindent \emph{i)} We have assumed that the progeny distribution $\xi$ has finite variance for 
the sake of convenience. The local convergence of $T_n$ towards Kesten's tree actually also 
holds in the case when $\Var(\xi)=\infty$ (provided that $\E[\xi]=1$), see for instance, 
Theorem 7.1 of Janson (2012). In this situation, one still has that the size of an unconditioned 
tree $T$ satisfies $|T|<\infty$ almost surely, and the proofs can be extended to this case. 

\noindent \emph{ii)} We have stated our results for conditioned Galton--Watson trees for the sake 
of simplicity. One should easily be convinced that the results remain true under the weaker 
condition that $T_n$ converges locally to an infinite tree such that (1) there is a unique infinite path, 
and (2) the trees hanging from the spine are independent and identically distributed.

\noindent \emph{iii)} It would be interesting to investigate the more general setting 
where the set $S$ may be countably infinite, or an interval of $\R$. 
It seems believable, if $S$ is only countably infinite the result might remain true under an additional 
condition on the positive recurrence of the spine Markov chain. 
The continuous { state} space offers more possibilities for odd behaviors.


\section*{Acknowledgement}

We would like to thank warmly Jan Lukas Igelbrink who found a mistake in a previous version of the paper. 

\providecommand{\bysame}{\leavevmode\hbox to3em{\hrulefill}\thinspace}
\providecommand{\MR}{\relax\ifhmode\unskip\space\fi MR }
\providecommand{\MRhref}[2]{%
  \href{http://www.ams.org/mathscinet-getitem?mr=#1}{#2}
}
\providecommand{\href}[2]{#2}

\end{document}